\numberwithin{equation}{section}
\def\3bar{{|\hspace{-.02in}|\hspace{-.02in}|}}
\def\W{{\mathcal{W}}}
\def\bn{{\mathbf{n}}}
\def\bbeta{{\boldsymbol{\beta}}}
\newtheorem{algorithm}{WG-LS Algorithm}[section]
\numberwithin{equation}{section}
 \newtheorem{theorem}{Theorem}[section]
 \newtheorem{lemma}[theorem]{Lemma}
   \def\t#1{\operatorname{#1}}
\def\a#1{\begin{align*}#1\end{align*}} \def\an#1{\begin{align}#1\end{align}} 
\def\p#1{\begin{pmatrix}#1\end{pmatrix}}
\title[Weak Galerkin Finite Element]
{A weak Galerkin least squares finite element method for linear convection equations in non-divergence form}
  \author {Chunmei Wang} 
  \address{Department of Mathematics, University of Florida, Gainesville, FL 32611, USA. }
  \email{chunmei.wang@ufl.edu} 
\author {Shangyou Zhang}
\address{Department of Mathematical Sciences,  University of Delaware, Newark, DE 19716, USA}   \email{szhang@udel.edu}
\begin{document}

\begin{abstract}
This article develops a weak Galerkin least-squares (WG--LS) finite element method for first-order linear convection equations in non-divergence form. The method is formulated using discontinuous finite element functions and does not require any coercivity assumption on the convection vector or reaction coefficient. The resulting discrete problem leads to a symmetric and positive definite linear system and is applicable to general polygonal and polyhedral meshes. Under minimal regularity assumptions on the coefficients, optimal-order error estimates are established for the WG--LS approximation in a suitable energy norm. Numerical experiments are presented to confirm the theoretical convergence results and to demonstrate the accuracy and efficiency of the proposed method.
\end{abstract}

\keywords{
weak Galerkin, finite element methods,  least-squares, linear convection, weak gradient, polygonal or polyhedral meshes. }
 
\subjclass[2010]{65N30, 65N15, 65N12, 65N20}
 
\maketitle

\section{Introduction}

This paper is concerned with the development of a new numerical method for first-order linear convection equations in non-divergence form using discontinuous finite element functions. For simplicity, we consider the model problem of finding an unknown function $u$ such that
\begin{equation}\label{model}
\begin{split}
  \bbeta \cdot \nabla u + c(x)u &= f \quad \text{in } \Omega, \\
  u &= g \quad \text{on } \Gamma_-,
\end{split}
\end{equation}
where $\Omega$ is an open, bounded, and connected domain in $\mathbb{R}^d$ ($d=2,3$) with Lipschitz continuous boundary $\Gamma=\partial\Omega$. The inflow boundary $\Gamma_- \subset \Gamma$ is defined by the condition $\bbeta \cdot \bn < 0$, where $\bn$ denotes the unit outward normal vector on $\Gamma$. We assume that the convection vector $\bbeta \in[L^\infty(\Omega)]^d$, the reaction coefficient $c\in L^\infty(\Omega)$, the source term $f\in L^2(\Omega)$, and the inflow boundary data $g\in L^2(\Gamma_-)$.

First-order linear partial differential equations (PDEs) of hyperbolic type, also referred to as transport or linear convection equations, arise in many areas of science and engineering, including fluid dynamics and neutron transport. Over the past several decades, substantial research effort has been devoted to the development of accurate and efficient numerical methods for hyperbolic problems. Due to localized phenomena such as propagating discontinuities and sharp transition layers, the numerical approximation of hyperbolic equations remains challenging. In particular, since linear hyperbolic PDEs generally admit discontinuous solutions for nonsmooth boundary data, it is difficult to design numerical methods that simultaneously achieve high-order accuracy in smooth regions and sharp resolution of discontinuities while avoiding spurious oscillations near nonsmooth features \cite{bd1999}. Moreover, linear hyperbolic equations serve as prototype models for more general hyperbolic systems, including nonlinear conservation laws \cite{l1992} and transport equations in phase space \cite{lm1993}. Effective numerical methods for linear convection equations can therefore serve as essential building blocks for the numerical solution of complex hyperbolic PDEs \cite{l1992}. 

A wide range of numerical methods has been developed for linear transport equations, including streamline-upwind Petrov--Galerkin methods \cite{eehj1996}, residual distribution methods \cite{jnp1984,eehj1996,a2001}, least-squares finite element methods \cite{cj1988,bc2001,hjs2002,smmo2004,b1999}, stabilized finite element methods \cite{burman2014}, and various discontinuous Galerkin methods \cite{rh,lr,c1999,cs,hss,j2004,eg,bh,b,be,bs,bs2007}. Most existing studies of the linear transport equation \eqref{model} rely on a coercivity condition on the convection vector $\bbeta$ and the reaction coefficient $c$ of the form
\[
c + \tfrac12 \nabla \cdot \bbeta \ge \alpha_0,
\]
for some positive constant $\alpha_0$, or a similar assumption. Such conditions are often restrictive in practical applications and may exclude important physical models, including compressible flows and exothermic reaction processes \cite{burman2014}. One notable exception is the stabilized finite element method developed by Burman \cite{burman2014}, which assumes $\bbeta\in[W^{2,\infty}(\Omega)]^d$ and $c\in W^{1,\infty}(\Omega)$.

The weak Galerkin (WG) finite element method, originally introduced in \cite{ellip_JCAM2013} and further developed in \cite{wg1,wg2,wg3,wg4,wg5,wg6,wg7,wg8,wg9,wg10,wg11,wg12,wg13,wg14,wg15,wg16,wg17,wg18,wg19,wg20,wg21,itera,wz2023,wy3655}, is based on the use of weak derivatives and weak continuity, leading to numerical schemes that are stable and highly flexible with respect to general polygonal and polyhedral meshes. An important extension of the WG framework is the primal-dual weak Galerkin (PDWG) method \cite{pdwg1,pdwg2,pdwg3,pdwg4,pdwg5,pdwg6,pdwg7,pdwg8,pdwg9,pdwg10,pdwg11,pdwg12,pdwg13,pdwg14,pdwg15}. In the PDWG approach, numerical approximations are formulated as constrained minimization problems in which the governing equations are enforced through weak constraints. The resulting Euler--Lagrange systems involve both primal and dual variables and possess favorable symmetry and stability properties. PDWG methods have also been developed for linear transport equations; see, for example, \cite{wwhyperbolic,pdwg1,pdwg6,pdwg12}.

The objective of this paper is to develop a new numerical method for the linear convection problem \eqref{model}, where the convection vector $\bbeta$ and the reaction coefficient $c$ are assumed to be only piecewise smooth, without imposing any additional coercivity condition such as the one above. To this end, we propose a \emph{weak Galerkin least-squares (WG--LS) finite element method} that is easy to implement and leads to a discretized linear system that is symmetric and positive definite, distinguishing it from many existing weak Galerkin formulations; see, for example, \cite{wwhyperbolic,pdwg1,pdwg6,pdwg12}. The proposed WG--LS method is rigorously analyzed with respect to stability and convergence. Moreover, the associated theoretical analysis is established under minimal assumptions on the linear convection problem, requiring only the uniqueness of the continuous solution and piecewise smoothness of the coefficients.

The remainder of the paper is organized as follows. In Section~2, we briefly review the weak gradient and its discrete counterpart. In Section~3, we present the WG--LS algorithm for the first-order linear convection problem in non-divergence form. In Section~4, we establish the existence and uniqueness of the numerical solution. Section~5 is devoted to the derivation of the error equations for the WG--LS finite element approximation. In Section~6, we prove optimal-order error estimates in a discrete Sobolev norm. Finally, in Section~7, numerical results are reported to demonstrate the stability, accuracy, and efficiency of the proposed WG--LS method.

Throughout the paper, we adopt standard notation for Sobolev spaces and norms. For any open bounded domain $D\subset\mathbb{R}^d$ with Lipschitz continuous boundary, $\|\cdot\|_{s,D}$ and $|\cdot|_{s,D}$ denote the norm and seminorm of the Sobolev space $H^s(D)$ for $s\ge0$, respectively, and $(\cdot,\cdot)_{s,D}$ denotes the corresponding inner product. In particular, $H^0(D)=L^2(D)$, with norm $\|\cdot\|_D$ and inner product $(\cdot,\cdot)_D$. When $D=\Omega$, or when the domain is clear from the context, the subscript $D$ is omitted.

 \section{Discrete Weak Gradient}

Let $T$ be a polygonal (in two dimensions) or polyhedral (in three dimensions) domain with boundary $\partial T$. A \emph{weak function} on $T$ is defined as an ordered pair
$v=\{v_0,v_b\}$, where $v_0\in L^2(T)$ represents the value of $v$ in the interior of $T$, and
$v_b\in L^2(\partial T)$ represents the value of $v$ on the boundary $\partial T$. In general, $v_b$ is not required to be the trace of $v_0$ on $\partial T$, although this choice is admissible.

We denote by $\W(T)$ the space of all weak functions on $T$, defined by
\begin{equation}\label{eq:weakspace}
\W(T)=\bigl\{v=\{v_0,v_b\}: v_0\in L^2(T),\ v_b\in L^2(\partial T)\bigr\}.
\end{equation}

The \emph{weak gradient} of a function $v\in \W(T)$, denoted by $\nabla_w v$, is defined as a linear functional acting on $[H^1(T)]^d$ such that
\begin{equation}\label{eq:weakgrad}
(\nabla_w v,\boldsymbol{\psi})_T
:=-(v_0,\nabla\cdot\boldsymbol{\psi})_T
+\langle v_b,\boldsymbol{\psi}\cdot\mathbf{n}\rangle_{\partial T},
\qquad \forall \boldsymbol{\psi}\in [H^1(T)]^d,
\end{equation}
where $\mathbf{n}$ denotes the outward unit normal vector on $\partial T$.

Let $P_r(T)$ be the space of polynomials of degree at most $r$ on $T$. The \emph{discrete weak gradient} of $v\in\W(T)$, denoted by $\nabla_{w,r,T}v$, is defined as the unique vector-valued polynomial in $[P_r(T)]^d$ satisfying
\begin{equation}\label{disgradient}
(\nabla_{w,r,T} v,\boldsymbol{\psi})_T
=-(v_0,\nabla\cdot\boldsymbol{\psi})_T
+\langle v_b,\boldsymbol{\psi}\cdot\mathbf{n}\rangle_{\partial T},
\qquad \forall \boldsymbol{\psi}\in [P_r(T)]^d.
\end{equation}

If $v_0\in H^1(T)$, then by integration by parts, \eqref{disgradient} can be equivalently written as
\begin{equation}\label{disgradient_star}
(\nabla_{w,r,T} v,\boldsymbol{\psi})_T
=(\nabla v_0,\boldsymbol{\psi})_T
-\langle v_0-v_b,\boldsymbol{\psi}\cdot\mathbf{n}\rangle_{\partial T},
\qquad \forall \boldsymbol{\psi}\in [P_r(T)]^d.
\end{equation}
 \section{WG--LS Algorithm}\label{Section:WGFEM}

Let $\mathcal{T}_h$ be a shape-regular partition of the domain $\Omega$ into polygonal (2D) or polyhedral (3D) elements in the sense of \cite{wy3655}. Let $\mathcal{E}_h$ denote the set of all edges/faces in $\mathcal{T}_h$, and let
$\mathcal{E}_h^0=\mathcal{E}_h\setminus\partial\Omega$ be the set of all interior edges/faces. For each $T\in\mathcal{T}_h$, denote by $h_T$ the diameter of $T$, and define the mesh size $h=\max_{T\in\mathcal{T}_h}h_T$.

For a given integer $k\ge1$, define the local weak finite element space
\[
W_k(T)=\bigl\{\{\sigma_0,\sigma_b\}:\sigma_0\in P_k(T),\ \sigma_b\in P_k(e),\ e\subset\partial T\bigr\}.
\]
By patching these local spaces together with single-valued boundary components on interior edges/faces, we obtain the global weak finite element space $W_h$. Let $W_h^0$ be the subspace of $W_h$ consisting of functions with vanishing boundary values on the inflow boundary $\Gamma_-$, i.e.,
\[
W_h^0=\bigl\{\{\sigma_0,\sigma_b\}\in W_h:\ \sigma_b|_e=0,\ e\subset\Gamma_-\bigr\}.
\]

For simplicity of notation, for any $\sigma\in W_h$, we denote by $\nabla_w\sigma$ the discrete weak gradient defined elementwise by
\[
(\nabla_w\sigma)|_T=\nabla_{w,k,T}(\sigma|_T),
\qquad \forall T\in\mathcal{T}_h.
\]

We define the following bilinear forms:
\[
s(u,v)=\sum_{T\in\mathcal{T}_h}\int_{\partial T}h_T^{-1}(u_0-u_b)(v_0-v_b)\,ds,
\]
\[
a(u,v)=\sum_{T\in\mathcal{T}_h}
\bigl(\bbeta\cdot\nabla_w u + cu_0,\ \bbeta\cdot\nabla_w v + cv_0\bigr)_T,
\]
for all $u,v\in W_h$.

We are now ready to state the WG--LS finite element scheme for the first-order linear convection problem \eqref{model}.

\begin{algorithm}\label{PDWG1}
Find $u_h\in W_h$ with $u_b=Q_b g$ on $\Gamma_-$ such that
\begin{equation}\label{al-general}
a(u_h,v_h)+s(u_h,v_h)
=(f,\bbeta\cdot\nabla_w v_h + cv_0),
\qquad \forall v_h\in W_h^0,
\end{equation}
where $Q_b$ denotes the local $L^2$ projection onto $P_k(e)$ on each edge/face $e$.
\end{algorithm}

\section{Solution Existence and Uniqueness}\label{Section:EU}

On each element $T\in\mathcal{T}_h$, let $Q_0$ denote the $L^2$ projection onto $P_k(T)$, where $k\ge1$ is a fixed integer. On each edge/face $e\subset\partial T$, let $Q_b$ be the $L^2$ projection onto $P_k(e)$. For any $w\in H^1(\Omega)$, define the local $L^2$ projection $Q_h w\in W_h$ by
\[
(Q_h w)|_T := \{Q_0(w|_T),\, Q_b(w|_{\partial T})\}, 
\qquad \forall T\in\mathcal{T}_h.
\] 

\begin{lemma}\label{Lemma5.1}
The $L^2$ projection operators $Q_h$ and $Q_0$ satisfy the following commutative property:
\begin{equation}\label{eq:commute}
\nabla_w(Q_h w)=Q_0(\nabla w), \qquad \forall w\in H^1(T).
\end{equation}
\end{lemma}

\begin{proof}
For any $\boldsymbol{\psi}\in [P_k(T)]^d$, using \eqref{disgradient} we obtain
\[
\begin{aligned}
(\nabla_w Q_h w,\boldsymbol{\psi})_T
&=-(Q_0 w,\nabla\cdot\boldsymbol{\psi})_T
 +\langle Q_b w,\boldsymbol{\psi}\cdot\mathbf{n}\rangle_{\partial T} \\
&=-(w,\nabla\cdot\boldsymbol{\psi})_T
 +\langle w,\boldsymbol{\psi}\cdot\mathbf{n}\rangle_{\partial T} \\
&=(\nabla w,\boldsymbol{\psi})_T
=(Q_0\nabla w,\boldsymbol{\psi})_T,
\end{aligned}
\]
which proves \eqref{eq:commute}.
\end{proof}

\begin{lemma}\label{thmunique1}
Assume that the first-order linear convection problem \eqref{model} admits a unique solution. Then the WG--LS scheme \eqref{al-general} has a unique solution.
\end{lemma}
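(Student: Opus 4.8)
The plan is to exploit the least-squares (hence symmetric positive semidefinite) structure of the scheme and reduce the whole question to the assumed uniqueness of the continuous problem \eqref{model}. Since \eqref{al-general} is a square linear system posed on the finite-dimensional space $W_h^0$, existence and uniqueness are equivalent, so it suffices to show that the homogeneous problem admits only the trivial solution. Accordingly, I would set $f=0$ and $g=0$ and let $u_h\in W_h^0$ satisfy $a(u_h,v_h)+s(u_h,v_h)=0$ for all $v_h\in W_h^0$.

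Choosing $v_h=u_h$ yields $a(u_h,u_h)+s(u_h,u_h)=0$. Because both forms are quadratic and nonnegative, namely $a(u_h,u_h)=\sum_{T}\|\bbeta\cdot\nabla_w u_h+cu_0\|_T^2$ and $s(u_h,u_h)=\sum_T\int_{\partial T}h_T^{-1}(u_0-u_b)^2\,ds$, each must vanish separately. From $s(u_h,u_h)=0$ I conclude $u_0=u_b$ on $\partial T$ for every $T\in\mathcal{T}_h$; since $u_b$ is single-valued on interior edges/faces and $u_b=0$ on $\Gamma_-$, the interior component $u_0$ has matching traces across all interior faces and therefore is a globally continuous piecewise polynomial, so $u_0\in H^1(\Omega)$ with $u_0=0$ on $\Gamma_-$.

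Next I would identify the discrete weak gradient with the classical one. Since $u_0=u_b$ on each $\partial T$, the identity \eqref{disgradient_star} reduces to $(\nabla_w u_h,\boldsymbol{\psi})_T=(\nabla u_0,\boldsymbol{\psi})_T$ for all $\boldsymbol{\psi}\in[P_k(T)]^d$; as $\nabla u_0\in[P_{k-1}(T)]^d\subset[P_k(T)]^d$, this forces $\nabla_w u_h=\nabla u_0$ elementwise. Substituting into $a(u_h,u_h)=0$ gives $\bbeta\cdot\nabla u_0+cu_0=0$ almost everywhere in $\Omega$. Hence $u_0\in H^1(\Omega)$ solves the homogeneous convection problem with vanishing inflow data, and the assumed uniqueness of the continuous solution to \eqref{model} forces $u_0\equiv0$, whence $u_b=u_0=0$ and $u_h=0$.

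The main obstacle, and the only place where the continuous hypothesis is genuinely invoked, is this last step: verifying that the piecewise-polynomial $u_0$ is an admissible solution of \eqref{model} in the sense required by the uniqueness assumption. This hinges on having promoted $u_0$ to $H^1(\Omega)$ through the vanishing of the stabilizer $s(\cdot,\cdot)$, which is precisely what permits replacing the discrete weak gradient by the strong gradient and turning the discrete equation into the pointwise PDE. Everything else follows directly from the nonnegativity of the two forms.
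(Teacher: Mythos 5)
Your proposal is correct and follows essentially the same route as the paper's proof: reduce to the homogeneous square system, test with $v_h=u_h$ so that both the least-squares form $a(\cdot,\cdot)$ and the stabilizer $s(\cdot,\cdot)$ vanish, use \eqref{disgradient_star} with $u_0=u_b$ to identify $\nabla_w u_h=\nabla u_0$ elementwise, and invoke the assumed uniqueness of the continuous problem on the resulting globally continuous solution of $\bbeta\cdot\nabla u_0+cu_0=0$ with zero inflow data. Your added details (existence--uniqueness equivalence in finite dimensions, the inclusion $\nabla u_0\in[P_{k-1}(T)]^d\subset[P_k(T)]^d$, and promoting $u_0$ to $H^1(\Omega)$ rather than merely $C^0(\Omega)$) only make the same argument more explicit.
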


\begin{proof}
It suffices to show that the homogeneous problem associated with \eqref{al-general} admits only the trivial solution. To this end, assume $f=0$ and $g=0$.  Thus,  $u_h\in W_h^0$ satisfies
\[
a(u_h,v_h)+s(u_h,v_h)=0,
\qquad \forall v_h\in W_h^0.
\]
Choosing $v_h=u_h$ yields
\[
a(u_h,u_h)+s(u_h,u_h)=0,
\]
which implies
\[
u_0=u_b \quad \text{on } \partial T, 
\qquad 
\bbeta\cdot\nabla_w u_h + cu_0=0 \quad \text{on } T,
\quad \forall T\in\mathcal{T}_h.
\]

Using \eqref{disgradient_star} and the identity $u_0=u_b$ on $\partial T$, we have
\[
(\nabla_w u_h,\boldsymbol{\psi})_T=(\nabla u_0,\boldsymbol{\psi})_T,
\qquad \forall \boldsymbol{\psi}\in[P_k(T)]^d,
\]
which implies $\nabla_w u_h=\nabla u_0$ on each element $T$. Consequently,
\[
\bbeta\cdot\nabla u_0 + cu_0=0 \quad \text{in } T,\ \forall T\in\mathcal{T}_h.
\]

Since $u_0=u_b$ on $\partial T$ for all $T$, it follows that $u_0\in C^0(\Omega)$. Moreover, the boundary condition $u_b=0$ on the inflow boundary $\Gamma_-$ implies $u_0=0$ on $\Gamma_-$. By the uniqueness of the continuous problem \eqref{model}, we conclude that $u_0\equiv0$ in $\Omega$, and hence $u_b\equiv0$. Therefore, $u_h\equiv0$, which completes the proof.
\end{proof}

We define a semi-norm on $W_h$ by
\[
\3bar v \3bar^2 := a(v,v).
\]
By an argument similar to that of Lemma~\ref{thmunique1}, one can show that $\3bar \cdot \3bar$ defines a norm on $W_h^0$.

\section{Error Equations}

Let $u$ be the exact solution of \eqref{model}, and let $u_h\in W_h$ be the WG--LS approximation defined by \eqref{al-general}. We define the error function
\begin{equation}\label{error}
e_h := u_h - Q_h u.
\end{equation}

For simplicity of presentation, we assume that $\bbeta$ and $c$ are piecewise constants in the following analysis. The extension to piecewise smooth coefficients follows similarly.

\begin{lemma}\label{errorequa}
The error function $e_h$ satisfies the following error equation:
\begin{equation}\label{erroreqn}
a(e_h,v_h)+s(e_h,v_h) = -s(Q_h u,v_h),
\qquad \forall v_h\in W_h^0.
\end{equation}
\end{lemma}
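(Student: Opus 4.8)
The plan is to derive the error equation by exploiting the defining relation of the WG--LS scheme together with the consistency of the exact solution against the bilinear form $a(\cdot,\cdot)$. First I would observe that by definition of the error function $e_h = u_h - Q_h u$ and bilinearity, we have
\[
a(e_h,v_h)+s(e_h,v_h) = a(u_h,v_h)+s(u_h,v_h) - \bigl(a(Q_h u,v_h)+s(Q_h u,v_h)\bigr),
\qquad \forall v_h\in W_h^0.
\]
The first grouped term is supplied directly by the scheme \eqref{al-general}, namely $a(u_h,v_h)+s(u_h,v_h)=(f,\bbeta\cdot\nabla_w v_h+cv_0)$. Thus the task reduces to showing that $a(Q_h u,v_h)=(f,\bbeta\cdot\nabla_w v_h+cv_0)$, because that identity would cancel the right-hand side against the projected solution's contribution and leave exactly $-s(Q_h u,v_h)$.

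The key step is therefore to prove the consistency identity $a(Q_h u,v_h)=(f,\bbeta\cdot\nabla_w v_h+cv_0)$. Here I would invoke the commutativity property of Lemma~\ref{Lemma5.1}, which gives $\nabla_w(Q_h u)=Q_0(\nabla u)$ on each element $T$. Substituting into the definition of $a(\cdot,\cdot)$, the first argument becomes $\bbeta\cdot\nabla_w(Q_h u)+c(Q_0 u)=\bbeta\cdot Q_0(\nabla u)+c Q_0 u$. Since $\bbeta$ and $c$ are piecewise constant, they commute with the local $L^2$ projection $Q_0$, so this equals $Q_0(\bbeta\cdot\nabla u+cu)=Q_0 f$, using the governing equation \eqref{model}. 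Hence on each $T$,
\[
a(Q_h u,v_h) = \sum_{T\in\mathcal{T}_h}\bigl(Q_0 f,\ \bbeta\cdot\nabla_w v_h+cv_0\bigr)_T.
\]

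It then remains to replace $Q_0 f$ by $f$ inside this inner product. The main obstacle — really the only subtle point — is recognizing that the second argument $\bbeta\cdot\nabla_w v_h+cv_0$ is itself a piecewise polynomial lying in the range against which $Q_0$ is self-adjoint: since $v_0\in P_k(T)$ and $\nabla_w v_h\in[P_k(T)]^d$ with $\bbeta,c$ piecewise constant, the quantity $\bbeta\cdot\nabla_w v_h+cv_0$ belongs to $P_k(T)$ on each element. By the defining property of the $L^2$ projection $Q_0$, we may then discard the projection on $f$, obtaining $(Q_0 f,\ \bbeta\cdot\nabla_w v_h+cv_0)_T=(f,\ \bbeta\cdot\nabla_w v_h+cv_0)_T$. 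Summing over $T$ yields the consistency identity exactly. Combining this with the scheme and the bilinearity decomposition above completes the proof, with the coercive-free structure of the argument resting entirely on the commutativity lemma and the piecewise-constant assumption on the coefficients, which is precisely the simplification announced before the statement.
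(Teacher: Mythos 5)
Your proposal is correct and follows essentially the same route as the paper: both arguments hinge on the commutativity property $\nabla_w(Q_h u)=Q_0(\nabla u)$ of Lemma~\ref{Lemma5.1}, the observation that $\bbeta\cdot\nabla_w v_h+cv_0\in P_k(T)$ so the $L^2$ projection $Q_0$ can be moved freely across the inner product, and subtraction of the scheme \eqref{al-general}; you merely reorganize the logic by first decomposing via bilinearity and then proving the consistency identity $a(Q_h u,v_h)=(f,\bbeta\cdot\nabla_w v_h+cv_0)$, whereas the paper tests the PDE first and subtracts afterward.
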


\begin{proof}
Testing the continuous problem \eqref{model} with $\bbeta\cdot\nabla_w v_h + cv_0$ yields
\[
(\bbeta\cdot\nabla u + cu,\ \bbeta\cdot\nabla_w v_h + cv_0)
=(f,\ \bbeta\cdot\nabla_w v_h + cv_0).
\]
Since $\bbeta\cdot\nabla_w v_h + cv_0 \in P_k(T)$ on each element $T$, applying the commutative property \eqref{eq:commute} gives
\[
(\bbeta\cdot\nabla_w Q_h u + cQ_0 u,\ \bbeta\cdot\nabla_w v_h + cv_0)
=(f,\ \bbeta\cdot\nabla_w v_h + cv_0).
\]
Subtracting the discrete formulation \eqref{al-general} from the above identity yields
\[
a(e_h,v_h)+s(u_h,v_h)=0,
\]
which implies
\[
a(e_h,v_h)+s(e_h,v_h)=-s(Q_h u,v_h),
\]
completing the proof.
\end{proof}

\section{Error Estimates}

Recall that $\mathcal{T}_h$ is a shape-regular finite element partition of the domain $\Omega$. Consequently, for any element $T\in\mathcal{T}_h$ and any function $\phi\in H^1(T)$, the following trace inequality holds \cite{wy3655}:
\begin{equation}\label{tracein}
\|\phi\|_{\partial T}^2 \le C\big(h_T^{-1}\|\phi\|_T^2 + h_T\|\nabla \phi\|_T^2\big).
\end{equation}
Moreover, if $\phi$ is a polynomial on $T$, then the sharper estimate
\begin{equation}\label{trace}
\|\phi\|_{\partial T}^2 \le C h_T^{-1}\|\phi\|_T^2
\end{equation}
holds true \cite{wy3655}.

\begin{lemma}\cite{wy3655}
Let $\mathcal{T}_h$ be a shape-regular finite element partition of $\Omega$. For any $0\le s\le1$ and $0\le n\le k$, there exists a constant $C>0$ such that
\begin{equation}\label{error2}
\sum_{T\in\mathcal{T}_h} h_T^{2s}\|u-Q_0 u\|_{s,T}^2
\le C h^{2n+2}\|u\|_{n+1}^2.
\end{equation}
\end{lemma}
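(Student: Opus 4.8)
The plan is to reduce the global estimate to a local, element-by-element bound and then derive that bound by a standard scaling (homogeneity) argument combined with the Bramble--Hilbert lemma. Since $Q_0$ is defined elementwise as the $L^2$ projection onto $P_k(T)$, it suffices to prove that on each $T\in\mathcal{T}_h$ one has
\[
\|u-Q_0 u\|_{s,T}\le C\,h_T^{\,n+1-s}\,|u|_{n+1,T},\qquad 0\le s\le 1,\ 0\le n\le k,
\]
with $C$ independent of $h_T$. Indeed, squaring, multiplying by $h_T^{2s}$, and summing over $T$ then yields the claim, because $h_T^{2s}\cdot h_T^{2(n+1-s)}=h_T^{2(n+1)}\le h^{2(n+1)}$ and $\sum_{T}|u|_{n+1,T}^2=|u|_{n+1,\Omega}^2\le\|u\|_{n+1}^2$.

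First I would establish the estimate on a reference configuration. Using the shape-regularity of $\mathcal{T}_h$ in the sense of \cite{wy3655}, each element $T$ is affinely rescaled to a unit-size element $\hat{T}$ of comparable shape through a map $F$ with Jacobian $B$; let $\hat{u}=u\circ F$ denote the pullback. On $\hat{T}$ the projection $\hat{Q}_0$ onto $P_k(\hat{T})$ reproduces every polynomial of degree at most $n$ (since $n\le k$) and is bounded on $H^{n+1}(\hat{T})$. Writing $\hat{u}-\hat{Q}_0\hat{u}=(I-\hat{Q}_0)(\hat{u}-p)$ for arbitrary $p\in P_n$ and taking the infimum over $p$, the Bramble--Hilbert (Deny--Lions) lemma gives
\[
\|\hat{u}-\hat{Q}_0\hat{u}\|_{s,\hat{T}}\le C\inf_{p\in P_n}\|\hat{u}-p\|_{n+1,\hat{T}}\le C\,|\hat{u}|_{n+1,\hat{T}}.
\]

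Next I would transfer this bound back to $T$ by tracking how the Sobolev (semi)norms transform under $F$. By shape-regularity one has $\|B\|\sim h_T$, $\|B^{-1}\|\sim h_T^{-1}$, and $|\det B|\sim h_T^{d}$, which yield the forward seminorm bound $|\hat{u}|_{n+1,\hat{T}}\le C\,h_T^{\,n+1-d/2}\,|u|_{n+1,T}$ and the reverse norm bound $\|u-Q_0 u\|_{s,T}\le C\,h_T^{\,d/2-s}\,\|\hat{u}-\hat{Q}_0\hat{u}\|_{s,\hat{T}}$ (for $h_T\le 1$ the top-order term dominates, so the same scaling covers fractional $s$). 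Chaining these three inequalities collapses the powers of $h_T$ to $h_T^{(d/2-s)+(n+1-d/2)}=h_T^{\,n+1-s}$ and produces exactly the desired local bound.

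The main obstacle is the scaling step on general polygonal and polyhedral elements, where there is no single fixed reference element and the Bramble--Hilbert constant a priori depends on the shape of $\hat{T}$. The role of the shape-regularity assumption of \cite{wy3655} is precisely to guarantee that the rescaled elements $\hat{T}$ form a family of uniformly star-shaped (hence uniformly Lipschitz) domains of unit diameter, over which the Bramble--Hilbert constant may be taken uniform; alternatively one covers $T$ by a bounded, $h_T$-independent number of shape-regular simplices and argues on those. I would rely on the trace and approximation machinery of \cite{wy3655} to make this uniformity rigorous, after which the remaining bookkeeping of powers of $h_T$ is routine.
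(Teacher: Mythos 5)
This lemma is stated in the paper without proof---it is imported verbatim from \cite{wy3655}---so there is no in-paper argument to compare against; your proposal must instead be judged as a self-contained proof, and as such it is correct. Your reduction to the local bound $\|u-Q_0u\|_{s,T}\le C h_T^{n+1-s}|u|_{n+1,T}$ followed by dilation to unit scale, the Bramble--Hilbert (Deny--Lions) lemma, and norm-scaling bookkeeping is exactly the standard route by which such $L^2$-projection estimates are proved in the cited reference, and you correctly flag the only nonroutine point, namely that the shape-regularity assumptions of \cite{wy3655} are what make the Bramble--Hilbert constant uniform over general polytopal elements (via uniform star-shapedness with respect to balls of radius comparable to $h_T$).
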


\begin{lemma}\label{lem:H1error}
Assume that the exact solution $u$ of the linear convection problem \eqref{model} satisfies $u\in H^{k+1}(\Omega)$. Then there exists a constant $C>0$, independent of $h$, such that
\begin{equation}\label{erroresti1}
\3bar e_h \3bar \le C h^{k}\|u\|_{k+1}.
\end{equation}
\end{lemma}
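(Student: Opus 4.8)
The plan is to exploit the error equation \eqref{erroreqn} by testing against $v_h = e_h$ itself. This substitution is legitimate since $e_h = u_h - Q_h u$ has vanishing boundary component on $\Gamma_-$: the algorithm imposes $u_b = Q_b g$ there, while the boundary part of $Q_h u$ on $\Gamma_-$ equals $Q_b(u|_{\Gamma_-}) = Q_b g$ because $u = g$ on $\Gamma_-$, so $e_h \in W_h^0$. Plugging $v_h = e_h$ into \eqref{erroreqn} gives
\[
\3bar e_h \3bar^2 + s(e_h, e_h) = -s(Q_h u, e_h).
\]
Since both $a(\cdot,\cdot)$ and $s(\cdot,\cdot)$ are symmetric and positive semidefinite, I would apply the Cauchy--Schwarz inequality to the stabilization form, $|s(Q_h u, e_h)| \le s(Q_h u, Q_h u)^{1/2} s(e_h, e_h)^{1/2}$, and then Young's inequality to absorb $s(e_h,e_h)^{1/2}$ into the left-hand side. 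This reduces the whole estimate to controlling a single projection quantity:
\[
\3bar e_h \3bar^2 \le \tfrac12\, s(Q_h u, Q_h u).
\]

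The remaining and technically central step is to show $s(Q_h u, Q_h u) \le C h^{2k}\|u\|_{k+1}^2$. Writing out the definition, $s(Q_h u, Q_h u) = \sum_{T} h_T^{-1}\|Q_0 u - Q_b u\|_{\partial T}^2$, I would split $Q_0 u - Q_b u = (Q_0 u - u) + (u - Q_b u)$ on each boundary piece. Because $Q_b$ is the $L^2$ projection onto $P_k(e)$ and the trace of $Q_0 u$ lies in $P_k(e)$, the best-approximation property gives $\|u - Q_b u\|_{\partial T} \le \|u - Q_0 u\|_{\partial T}$, whence $\|Q_0 u - Q_b u\|_{\partial T}^2 \le C\|u - Q_0 u\|_{\partial T}^2$. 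Applying the trace inequality \eqref{tracein} to $u - Q_0 u$ then produces
\[
s(Q_h u, Q_h u) \le C \sum_{T} \big(h_T^{-2}\|u - Q_0 u\|_T^2 + \|\nabla(u - Q_0 u)\|_T^2\big).
\]
Each term is estimated via the approximation result \eqref{error2}: the first through the full order-$(k+1)$ local bound $\|u-Q_0 u\|_T \le C h_T^{k+1}\|u\|_{k+1,T}$ (which absorbs the $h_T^{-2}$ weight), and the second with $s=1$, $n=k$. Both contribute at order $h^{2k}\|u\|_{k+1}^2$; combining with the reduced bound and taking square roots yields \eqref{erroresti1}.

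The main obstacle I anticipate is the bookkeeping of mesh-size powers in this last step. The stabilization carries a factor $h_T^{-1}$ and the trace inequality introduces an additional $h_T^{-1}$ alongside an $h_T$ on the gradient term, so these must combine precisely with the optimal local approximation rates to land at $h^{2k}$ rather than a suboptimal power; in particular the $h_T^{-2}\|u-Q_0 u\|_T^2$ contribution must be handled through the order-$(k+1)$ $L^2$ approximation estimate, not naively through \eqref{error2} at a negative index. The working assumption that $\bbeta$ and $c$ are piecewise constant keeps $a(\cdot,\cdot)$ elementwise well defined and ensures no consistency error beyond the stabilization term $s(Q_h u, Q_h u)$ enters, which is precisely what makes the argument reduce so cleanly.
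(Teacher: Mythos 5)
Your proof is correct and takes essentially the same route as the paper: test the error equation \eqref{erroreqn} with $v_h=e_h$, reduce everything to bounding $s(Q_h u,Q_h u)$, and estimate that term via the trace inequality \eqref{tracein} combined with the $L^2$-projection approximation estimates, landing at $O(h^{2k})$. Your write-up is in fact slightly more careful than the paper's at three points -- you verify $e_h\in W_h^0$ before testing, you retain $s(e_h,e_h)$ and absorb it cleanly with Young's inequality (where the paper loosely ``cancels'' a factor of $\3bar e_h\3bar$), and you justify $\|Q_0u-Q_bu\|_{\partial T}\le C\|u-Q_0u\|_{\partial T}$ via the best-approximation property of $Q_b$ -- but these are refinements of the identical argument, not a different approach.
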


\begin{proof}
Choosing $v_h=e_h$ in the error equation \eqref{erroreqn} yields
\[
\3bar e_h \3bar^2 = -s(Q_h u,e_h).
\]
Applying the Cauchy--Schwarz inequality and the definition of the stabilization term gives
\[
\3bar e_h \3bar^2
\le
\Bigg(\sum_{T\in\mathcal{T}_h} h_T^{-1}\|Q_0u-Q_bu\|_{\partial T}^2\Bigg)^{1/2}
\3bar e_h \3bar.
\]

Using the trace inequality \eqref{tracein} together with the approximation estimate \eqref{error2} (with $n=k$ and $s=0,1$), we obtain
\[
\sum_{T\in\mathcal{T}_h} h_T^{-1}\|Q_0u-Q_bu\|_{\partial T}^2
\le
C\sum_{T\in\mathcal{T}_h}
\big(h_T^{-2}\|u-Q_0u\|_T^2 +  \|u-Q_0u\|_{1,T}^2\big)
\le
C h^{2k}\|u\|_{k+1}^2.
\]
Combining the above estimates and canceling $\3bar e_h \3bar$ from both sides yields
\[
\3bar e_h \3bar \le C h^{k}\|u\|_{k+1},
\]
which completes the proof.
\end{proof}

\section{Numerical experiments}
In the first test,  we solve the convection equation \eqref{model} 
   on a square domain $\Omega= (-1,1)\times (-1,1)$, where  
\an{\label{lambda} \bbeta =\p{1\\1}, \quad c(x,y) =\lambda (x-\frac 12)(y-\frac 12).  }
By choosing $f$ and $g$ in \eqref{model}, the exact solution is 
\an{\label{u-1} u=\sin x \sin y. } 
            
\begin{figure}[H]
 \begin{center}\setlength\unitlength{1.0pt}
\begin{picture}(360,120)(0,0)
  \put(15,108){$G_1$:} \put(125,108){$G_2$:} \put(235,108){$G_3$:} 
  \put(0,-420){\includegraphics[width=380pt]{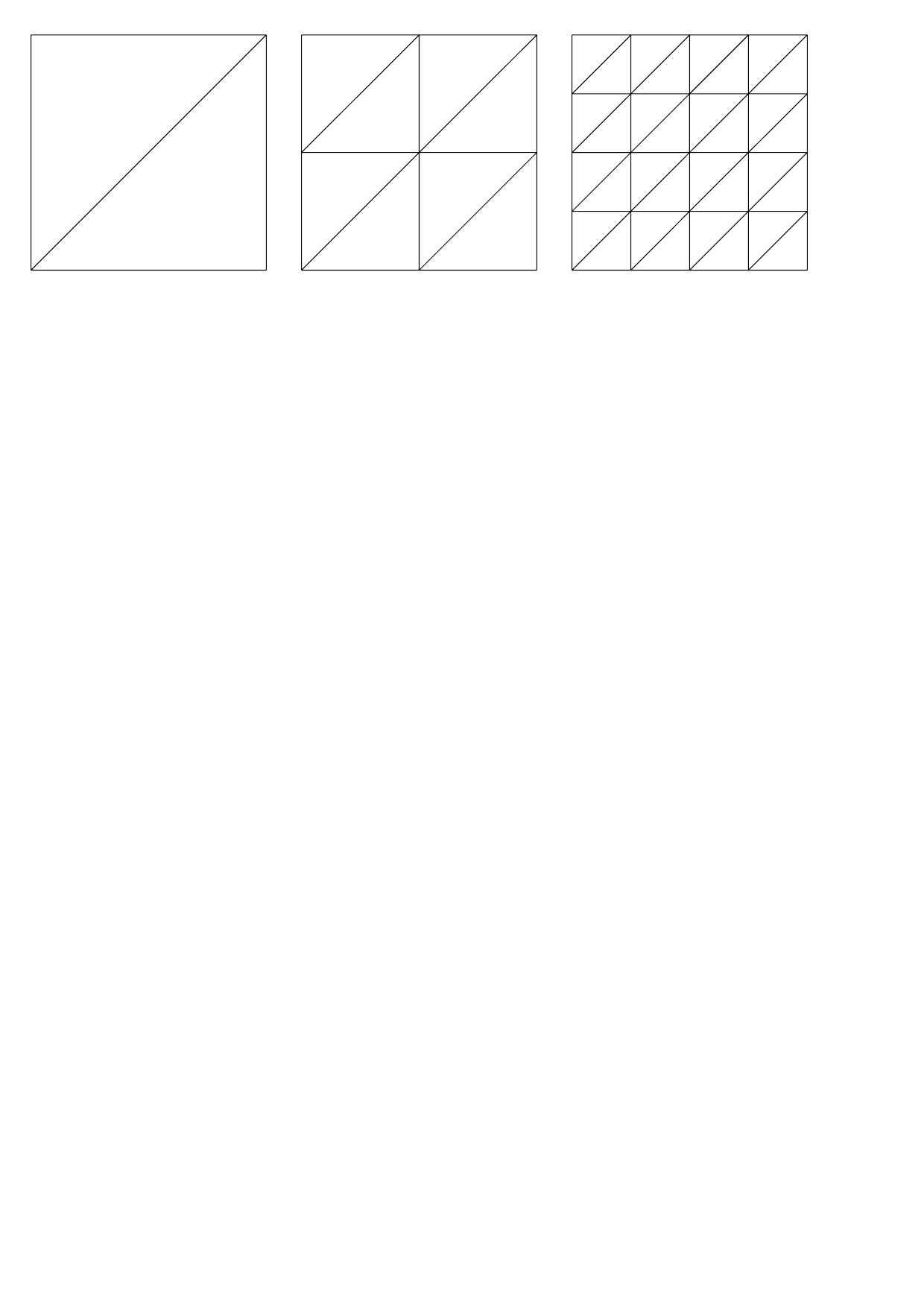}}  
 \end{picture}\end{center}
\caption{The triangular grids for the computation in Tables \ref{t1}--\ref{t4}. }\label{f21}
\end{figure}

The solution in \eqref{u-1} is approximated by the weak Galerkin finite element
   $P_k$-$P_k$/$P_{k+1}$ (for $\{u_0, u_b\}$/$\nabla_w$), $k= 1,2,3,4$, on triangular  
    grids shown in Figure \ref{f21}.
The errors and the computed orders of convergence are listed in Tables \ref{t1}--\ref{t4}.
The optimal order of convergence is achieved in every case.
   
  \begin{table}[H]
  \caption{By the $P_1$-$P_1$/$P_2$ element for \eqref{u-1}  on Figure \ref{f21} grids.} \label{t1}
\begin{center}  
   \begin{tabular}{c|rr|rr|rr}  
 \hline 
$G_i$ &  $ \|Q_h  u -   u_h \| $ & $O(h^r)$ &  $ \| \nabla_w( Q_h u- u_h )\| $ & $O(h^r)$ 
   &  $ \3bar  Q_h u- u_h  \3bar $ & $O(h^r)$  \\ \hline 
   &\multicolumn{6}{c}{$\lambda=1$ in \eqref{lambda} }\\  \hline 
 5&     0.649E-4 &  2.0&     0.126E-1 &  1.0&     0.168E-2 &  1.0 \\
 6&     0.162E-4 &  2.0&     0.630E-2 &  1.0&     0.842E-3 &  1.0 \\
 7&     0.406E-5 &  2.0&     0.315E-2 &  1.0&     0.421E-3 &  1.0 \\ 
 \hline  &\multicolumn{6}{c}{$\lambda=100$ in \eqref{lambda} }\\
 \hline 
 5&     0.442E-3 &  1.2&     0.138E-1 &  1.0&     0.174E-2 &  1.1 \\
 6&     0.153E-3 &  1.5&     0.666E-2 &  1.1&     0.852E-3 &  1.0 \\
 7&     0.443E-4 &  1.8&     0.322E-2 &  1.0&     0.423E-3 &  1.0 \\
  \hline  
\end{tabular} \end{center}  \end{table}
 
  \begin{table}[H]
  \caption{By the $P_2$-$P_2$/$P_3$ element for \eqref{u-1}  on Figure \ref{f21} grids.} \label{t2}
\begin{center}  
   \begin{tabular}{c|rr|rr|rr}  
 \hline 
$G_i$ &  $ \|Q_h  u -   u_h \| $ & $O(h^r)$ &  $ \| \nabla_w( Q_h u- u_h )\| $ & $O(h^r)$ 
   &  $ \3bar  Q_h u- u_h  \3bar $ & $O(h^r)$  \\ \hline 
   &\multicolumn{6}{c}{$\lambda=1$ in \eqref{lambda} }\\  \hline 
 3&     0.227E-4 &  2.7&     0.924E-3 &  1.8&     0.340E-3 &  2.0 \\
 4&     0.374E-5 &  2.6&     0.309E-3 &  1.6&     0.844E-4 &  2.0 \\
 5&     0.718E-6 &  2.4&     0.122E-3 &  1.3&     0.210E-4 &  2.0 \\
 \hline  &\multicolumn{6}{c}{$\lambda=100$ in \eqref{lambda} }\\
 \hline 
 3&     0.167E-3 &  1.8&     0.319E-2 &  1.3&     0.354E-3 &  3.5 \\
 4&     0.158E-4 &  3.4&     0.726E-3 &  2.1&     0.811E-4 &  2.1 \\
 5&     0.224E-5 &  2.8&     0.360E-3 &  1.0&     0.207E-4 &  2.0 \\
  \hline  
\end{tabular} \end{center}  \end{table}
 
  \begin{table}[H]
  \caption{By the $P_3$-$P_3$/$P_4$ element for \eqref{u-1}  on Figure \ref{f21} grids.} \label{t3}
\begin{center}  
   \begin{tabular}{c|rr|rr|rr}  
 \hline 
$G_i$ &  $ \|Q_h  u -   u_h \| $ & $O(h^r)$ &  $ \| \nabla_w( Q_h u- u_h )\| $ & $O(h^r)$ 
   &  $ \3bar  Q_h u- u_h  \3bar $ & $O(h^r)$  \\ \hline 
   &\multicolumn{6}{c}{$\lambda=1$ in \eqref{lambda} }\\  \hline 
 3&     0.366E-6 &  4.0&     0.403E-4 &  3.0&     0.696E-5 &  3.0 \\
 4&     0.227E-7 &  4.0&     0.503E-5 &  3.0&     0.866E-6 &  3.0 \\
 5&     0.141E-8 &  4.0&     0.629E-6 &  3.0&     0.108E-6 &  3.0 \\
 \hline  &\multicolumn{6}{c}{$\lambda=100$ in \eqref{lambda} }\\
 \hline 
 3&     0.143E-5 &  4.4&     0.537E-4 &  3.4&     0.100E-4 &  4.2 \\
 4&     0.375E-7 &  5.3&     0.529E-5 &  3.3&     0.947E-6 &  3.4 \\
 5&     0.254E-8 &  3.9&     0.679E-6 &  3.0&     0.111E-6 &  3.1 \\
  \hline  
\end{tabular} \end{center}  \end{table}
  
  \begin{table}[H]
  \caption{By the $P_4$-$P_4$/$P_5$ element for \eqref{u-1}  on Figure \ref{f21} grids.} \label{t4}
\begin{center}  
   \begin{tabular}{c|rr|rr|rr}  
 \hline 
$G_i$ &  $ \|Q_h  u -   u_h \| $ & $O(h^r)$ &  $ \| \nabla_w( Q_h u- u_h )\| $ & $O(h^r)$ 
   &  $ \3bar  Q_h u- u_h  \3bar $ & $O(h^r)$  \\ \hline 
   &\multicolumn{6}{c}{$\lambda=1$ in \eqref{lambda} }\\  \hline 
 2&     0.200E-6 &  5.2&     0.579E-5 &  4.2&     0.355E-5 &  4.2 \\
 3&     0.636E-8 &  5.0&     0.375E-6 &  3.9&     0.216E-6 &  4.0 \\
 4&     0.211E-9 &  4.9&     0.274E-7 &  3.8&     0.134E-7 &  4.0 \\
 \hline  &\multicolumn{6}{c}{$\lambda=100$ in \eqref{lambda} }\\
 \hline 
 2&     0.813E-6 &  6.4&     0.209E-4 &  5.2&     0.929E-5 &  6.5 \\
 3&     0.210E-7 &  5.3&     0.121E-5 &  4.1&     0.201E-6 &  5.5 \\
 4&     0.421E-9 &  5.6&     0.786E-7 &  3.9&     0.127E-7 &  4.0 \\
  \hline  
\end{tabular} \end{center}  \end{table}

The solution in \eqref{u-1} is computed again by the weak Galerkin finite element
   $P_k$-$P_k$/$P_{k+2}$ (for $\{u_0, u_b\}$/$\nabla_w$), $k= 1,2,3,4$, on nonconvex polygonal  
    grids shown in Figure \ref{f22}.
The errors and the computed orders of convergence are listed in Tables \ref{t5}--\ref{t8}.
The optimal order of convergence is obtained for all cases and in all norms.
Most results are slightly worse than those on triangular grids.
But for some unknown reasons, the results for the $P_2$ element are much better.
  
  \begin{figure}[H]
 \begin{center}\setlength\unitlength{1.0pt}
\begin{picture}(360,120)(0,0)
  \put(15,108){$G_1$:} \put(125,108){$G_2$:} \put(235,108){$G_3$:} 
  \put(0,-420){\includegraphics[width=380pt]{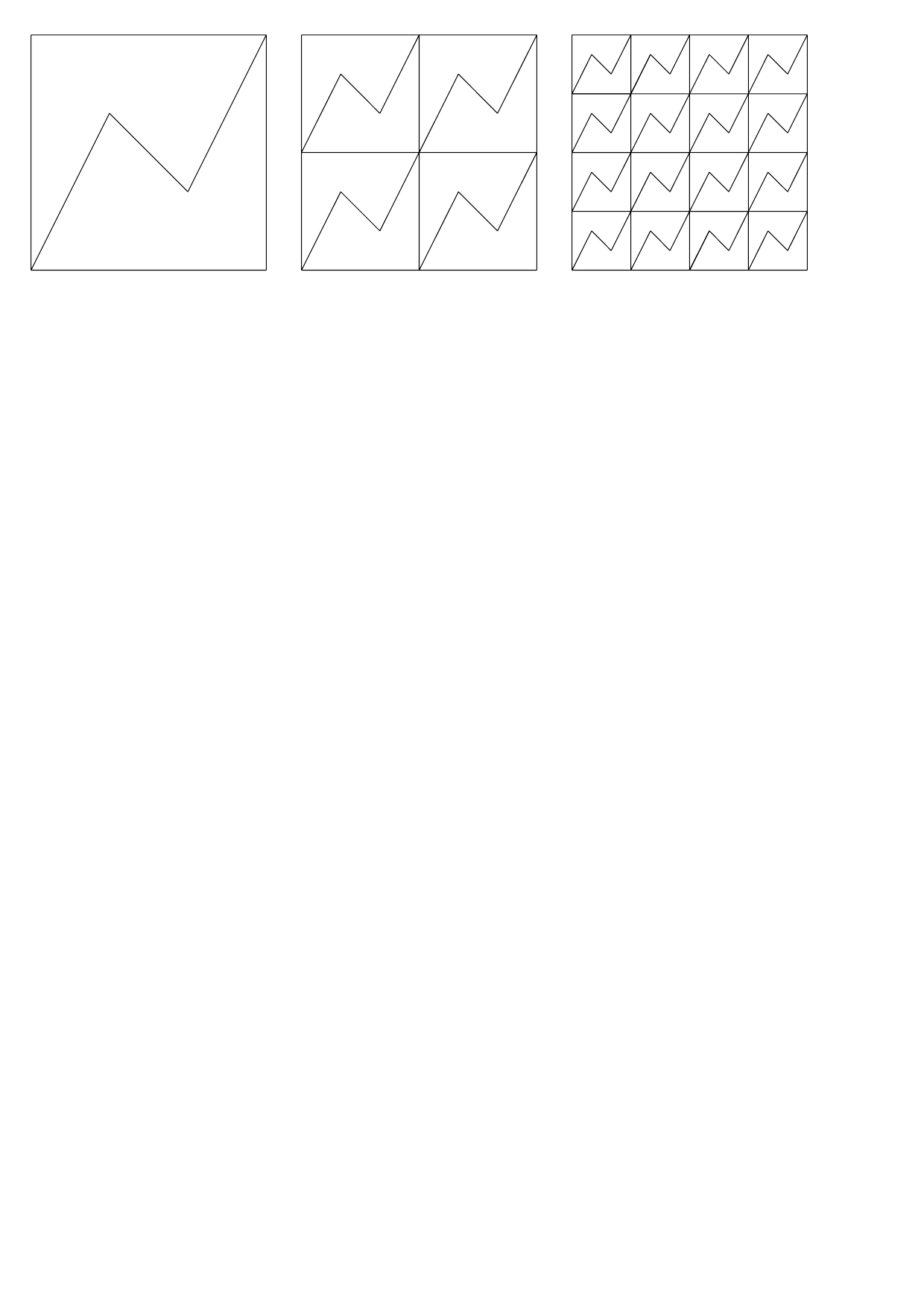}}  
 \end{picture}\end{center}
\caption{The nonconvex polygonal grids for the computation in Tables \ref{t5}--\ref{t8}. }\label{f22}
\end{figure}

  \begin{table}[H]
  \caption{By the $P_1$-$P_1$/$P_3$ element for \eqref{u-1}  on Figure \ref{f22} grids.} \label{t5}
\begin{center}  
   \begin{tabular}{c|rr|rr|rr}  
 \hline 
$G_i$ &  $ \|Q_h  u -   u_h \| $ & $O(h^r)$ &  $ \| \nabla_w( Q_h u- u_h )\| $ & $O(h^r)$ 
   &  $ \3bar  Q_h u- u_h  \3bar $ & $O(h^r)$  \\ \hline 
   &\multicolumn{6}{c}{$\lambda=1$ in \eqref{lambda} }\\  \hline 
 5&     0.702E-4 &  1.9&     0.715E-2 &  1.0&     0.190E-2 &  1.0 \\
 6&     0.185E-4 &  1.9&     0.357E-2 &  1.0&     0.949E-3 &  1.0 \\
 7&     0.489E-5 &  1.9&     0.178E-2 &  1.0&     0.474E-3 &  1.0 \\
 \hline  &\multicolumn{6}{c}{$\lambda=100$ in \eqref{lambda} }\\
 \hline 
 5&     0.386E-3 &  1.3&     0.831E-2 &  1.0&     0.194E-2 &  1.1 \\
 6&     0.143E-3 &  1.4&     0.397E-2 &  1.1&     0.956E-3 &  1.0 \\
 7&     0.439E-4 &  1.7&     0.188E-2 &  1.1&     0.475E-3 &  1.0 \\
  \hline  
\end{tabular} \end{center}  \end{table}
  
  \begin{table}[H]
  \caption{By the $P_2$-$P_2$/$P_4$ element for \eqref{u-1}  on Figure \ref{f22} grids.} \label{t6}
\begin{center}  
   \begin{tabular}{c|rr|rr|rr}  
 \hline 
$G_i$ &  $ \|Q_h  u -   u_h \| $ & $O(h^r)$ &  $ \| \nabla_w( Q_h u- u_h )\| $ & $O(h^r)$ 
   &  $ \3bar  Q_h u- u_h  \3bar $ & $O(h^r)$  \\ \hline 
   &\multicolumn{6}{c}{$\lambda=1$ in \eqref{lambda} }\\  \hline 
 3&     0.344E-4 &  2.9&     0.132E-2 &  2.1&     0.377E-3 &  2.9 \\
 4&     0.442E-5 &  3.0&     0.339E-3 &  2.0&     0.854E-4 &  2.1 \\
 5&     0.555E-6 &  3.0&     0.854E-4 &  2.0&     0.212E-4 &  2.0 \\
 \hline  &\multicolumn{6}{c}{$\lambda=100$ in \eqref{lambda} }\\
 \hline 
 3&     0.832E-4 &  4.4&     0.219E-2 &  4.2&     0.101E-2 &  4.9 \\
 4&     0.105E-4 &  3.0&     0.388E-3 &  2.5&     0.885E-4 &  3.5 \\
 5&     0.965E-6 &  3.4&     0.872E-4 &  2.2&     0.210E-4 &  2.1 \\
  \hline  
\end{tabular} \end{center}  \end{table}

  \begin{table}[H]
  \caption{By the $P_3$-$P_3$/$P_5$ element for \eqref{u-1}  on Figure \ref{f22} grids.} \label{t7}
\begin{center}  
   \begin{tabular}{c|rr|rr|rr}  
 \hline 
$G_i$ &  $ \|Q_h  u -   u_h \| $ & $O(h^r)$ &  $ \| \nabla_w( Q_h u- u_h )\| $ & $O(h^r)$ 
   &  $ \3bar  Q_h u- u_h  \3bar $ & $O(h^r)$  \\ \hline 
   &\multicolumn{6}{c}{$\lambda=1$ in \eqref{lambda} }\\  \hline 
 3&     0.178E-5 &  3.5&     0.753E-4 &  2.7&     0.114E-4 &  4.3 \\
 4&     0.134E-6 &  3.7&     0.111E-4 &  2.8&     0.113E-5 &  3.3 \\
 5&     0.922E-8 &  3.9&     0.151E-5 &  2.9&     0.134E-6 &  3.1 \\
 \hline  &\multicolumn{6}{c}{$\lambda=100$ in \eqref{lambda} }\\
 \hline 
 3&     0.134E-4 &  5.2&     0.161E-3 &  5.2&     0.660E-4 &  6.0 \\
 4&     0.245E-6 &  5.8&     0.109E-4 &  3.9&     0.156E-5 &  5.4 \\
 5&     0.907E-8 &  4.8&     0.145E-5 &  2.9&     0.138E-6 &  3.5 \\
  \hline  
\end{tabular} \end{center}  \end{table}

  \begin{table}[H]
  \caption{By the $P_4$-$P_4$/$P_6$ element for \eqref{u-1}  on Figure \ref{f22} grids.} \label{t8}
\begin{center}  
   \begin{tabular}{c|rr|rr|rr}  
 \hline 
$G_i$ &  $ \|Q_h  u -   u_h \| $ & $O(h^r)$ &  $ \| \nabla_w( Q_h u- u_h )\| $ & $O(h^r)$ 
   &  $ \3bar  Q_h u- u_h  \3bar $ & $O(h^r)$  \\ \hline 
   &\multicolumn{6}{c}{$\lambda=1$ in \eqref{lambda} }\\  \hline 
 2&     0.120E-5 &  7.2&     0.648E-4 &  5.7&     0.396E-4 &  5.4 \\
 3&     0.244E-7 &  5.6&     0.236E-5 &  4.8&     0.668E-6 &  5.9 \\
 4&     0.406E-8 &  ---&     0.163E-6 &  3.9&     0.165E-7 &  5.3 \\
 \hline  &\multicolumn{6}{c}{$\lambda=100$ in \eqref{lambda} }\\
 \hline 
 2&     0.156E-3 &  5.4&     0.186E-2 &  5.5&     0.673E-3 &  7.0 \\
 3&     0.659E-6 &  7.9&     0.146E-4 &  7.0&     0.543E-5 &  7.0 \\
 4&     0.897E-8 &  6.2&     0.219E-6 &  6.1&     0.460E-7 &  6.9 \\ 
  \hline  
\end{tabular} \end{center}  \end{table}
  
Next we solve the 3D convection equation \eqref{model} 
   on a cub domain $\Omega= (-1,1)\times (-1,1)\times (-1,1)$, where  
\a{ \bbeta =\p{1\\1\\1}, \quad c(x,y) =- 3\lambda, \quad f=0.  }
This is a more realistic problem, i.e., $f=0$, that the usual bad behavior in flow cannot be
  eliminated by the artificial source.
By choosing $g$ in \eqref{model}, the exact solution is 
\an{\label{u-2} u=\t{e}^{\lambda(x+y+z)}. } 
             
\begin{figure}[H]
 \begin{center}\setlength\unitlength{1.0pt}
\begin{picture}(360,120)(0,0)
  \put(15,108){$G_1$:} \put(125,108){$G_2$:} \put(235,108){$G_3$:} 
  \put(0,-420){\includegraphics[width=380pt]{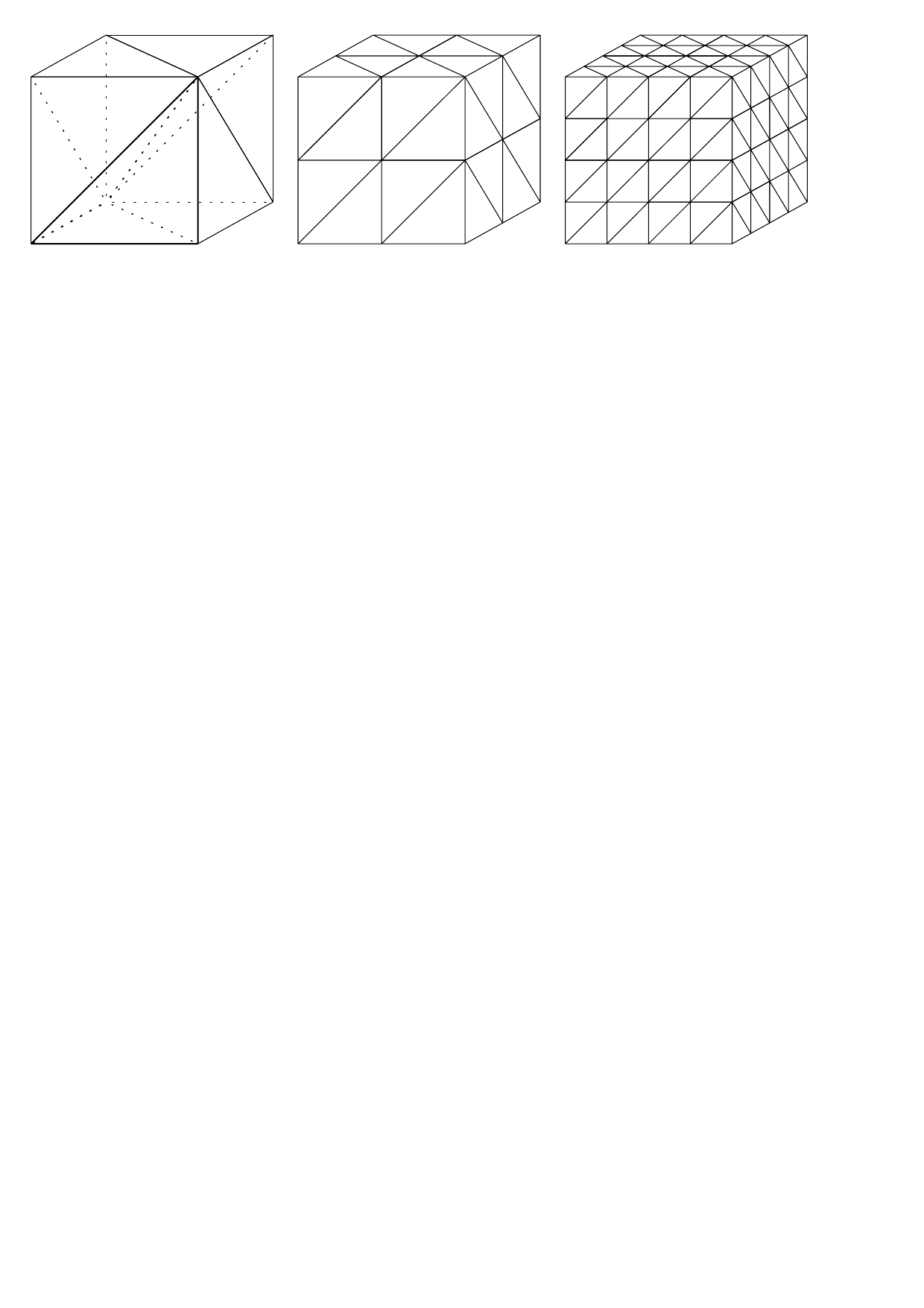}}  
 \end{picture}\end{center}
\caption{The tetrahedral grids for the computation in Tables \ref{t9}--\ref{t11}. }\label{f23}
\end{figure}
 
The solution in \eqref{u-2} is computed by the weak Galerkin finite element
   $P_k$-$P_k$/$P_{k+1}$ (for $\{u_0, u_b\}$/$\nabla_w$), $k= 1,2,3$, on tetrahedral
    grids shown in Figure \ref{f23}.
The errors and the computed orders of convergence are listed in Tables \ref{t9}--\ref{t11}.
The optimal order of convergence is obtained for all cases and in all norms. 
   
  \begin{table}[H]
  \caption{By the $P_1$-$P_1$/$P_2$ element for \eqref{u-2}  on Figure \ref{f23} grids.} \label{t9}
\begin{center}  
   \begin{tabular}{c|rr|rr|rr}  
 \hline 
$G_i$ &  $ \|Q_h  u -   u_h \| $ & $O(h^r)$ &  $ \| \nabla_w( Q_h u- u_h )\| $ & $O(h^r)$ 
   &  $ \3bar  Q_h u- u_h  \3bar $ & $O(h^r)$  \\ \hline 
   &\multicolumn{6}{c}{$\lambda=1$ in \eqref{u-2} }\\  \hline 
 3 &    0.916E-2 &2.37 &    0.594E+0 &1.02 &    0.576E+0 &0.98 \\
 4 &    0.199E-2 &2.20 &    0.296E+0 &1.01 &    0.287E+0 &1.00 \\
 5 &    0.469E-3 &2.08 &    0.148E+0 &1.00 &    0.143E+0 &1.01 \\
 \hline  &\multicolumn{6}{c}{$\lambda=2$ in \eqref{u-2} }\\
 \hline 
 3 &    0.396E+0 &2.54 &    0.209E+2 &1.08 &    0.199E+2 &0.99 \\
 4 &    0.768E-1 &2.37 &    0.103E+2 &1.03 &    0.996E+1 &1.00 \\
 5 &    0.171E-1 &2.17 &    0.509E+1 &1.01 &    0.495E+1 &1.01 \\
  \hline  
\end{tabular} \end{center}  \end{table}

  \begin{table}[H]
  \caption{By the $P_2$-$P_2$/$P_3$ element for \eqref{u-2}  on Figure \ref{f23} grids.} \label{t10}
\begin{center}  
   \begin{tabular}{c|rr|rr|rr}  
 \hline 
$G_i$ &  $ \|Q_h  u -   u_h \| $ & $O(h^r)$ &  $ \| \nabla_w( Q_h u- u_h )\| $ & $O(h^r)$ 
   &  $ \3bar  Q_h u- u_h  \3bar $ & $O(h^r)$  \\ \hline 
   &\multicolumn{6}{c}{$\lambda=1$ in \eqref{u-2} }\\  \hline 
 2 &    0.171E-2 &4.39 &    0.298E-1 &3.28 &    0.215E-1 &2.91 \\
 3 &    0.102E-3 &4.07 &    0.345E-2 &3.11 &    0.273E-2 &2.98 \\
 4 &    0.636E-5 &4.00 &    0.417E-3 &3.05 &    0.343E-3 &2.99 \\
 \hline  &\multicolumn{6}{c}{$\lambda=2$ in \eqref{u-2} }\\
 \hline 
 2 &    0.381E+0 &2.81 &    0.473E+1 &2.41 &    0.277E+1 &2.64 \\
 3 &    0.102E+0 &1.90 &    0.152E+1 &1.64 &    0.370E+0 &2.90 \\
 4 &    0.113E-1 &3.17 &    0.279E+0 &2.44 &    0.470E-1 &2.98 \\
  \hline  
\end{tabular} \end{center}  \end{table}

  \begin{table}[H]
  \caption{By the $P_3$-$P_3$/$P_4$ element for \eqref{u-2}  on Figure \ref{f23} grids.} \label{t11}
\begin{center}  
   \begin{tabular}{c|rr|rr|rr}  
 \hline 
$G_i$ &  $ \|Q_h  u -   u_h \| $ & $O(h^r)$ &  $ \| \nabla_w( Q_h u- u_h )\| $ & $O(h^r)$ 
   &  $ \3bar  Q_h u- u_h  \3bar $ & $O(h^r)$  \\ \hline 
   &\multicolumn{6}{c}{$\lambda=1$ in \eqref{u-2} }\\  \hline 
 2 &    0.171E-2 &4.39 &    0.298E-1 &3.28 &    0.215E-1 &2.91 \\
 3 &    0.102E-3 &4.07 &    0.345E-2 &3.11 &    0.273E-2 &2.98 \\
 4 &    0.636E-5 &4.00 &    0.417E-3 &3.05 &    0.343E-3 &2.99 \\
 \hline  &\multicolumn{6}{c}{$\lambda=2$ in \eqref{u-2} }\\
 \hline 
 2 &    0.381E+0 &2.81 &    0.473E+1 &2.41 &    0.277E+1 &2.64 \\
 3 &    0.102E+0 &1.90 &    0.152E+1 &1.64 &    0.370E+0 &2.90 \\
 4 &    0.113E-1 &3.17 &    0.279E+0 &2.44 &    0.470E-1 &2.98 \\
  \hline  
\end{tabular} \end{center}  \end{table}

\begin{figure}[H]
 \begin{center}\setlength\unitlength{1.0pt}
\begin{picture}(360,120)(0,0)
  \put(15,108){$G_1$:} \put(125,108){$G_2$:} \put(235,108){$G_3$:} 
  \put(0,-420){\includegraphics[width=380pt]{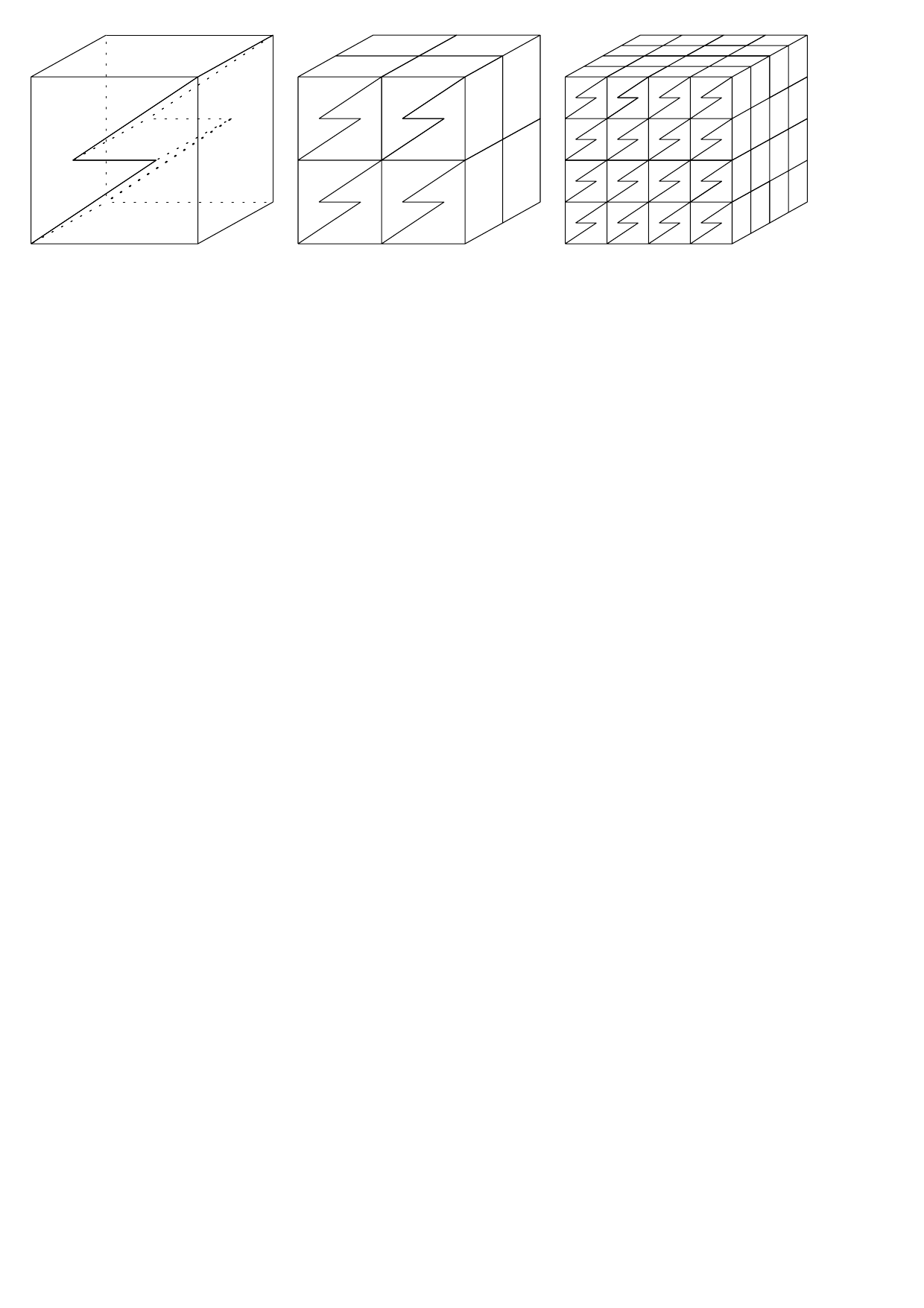}}  
 \end{picture}\end{center}
\caption{The nonconvex polyhedral grids for the computation in Tables \ref{t12}--\ref{t14}. }\label{f24}
\end{figure}
 
The solution in \eqref{u-2} is computed again by the weak Galerkin finite element
   $P_k$-$P_k$/$P_{k+2}$ (for $\{u_0, u_b\}$/$\nabla_w$), $k= 1,2,3$, on nonconvex polyhedral 
    grids shown in Figure \ref{f24}.
The errors and the computed orders of convergence are listed in Tables \ref{t12}--\ref{t14}.
The optimal order of convergence is obtained for all cases and in all norms. 
The results on nonconvex polyhedral grids are much better, as we have much less unknowns.
   
  \begin{table}[H]
  \caption{By the $P_1$-$P_1$/$P_3$ element for \eqref{u-2}  on Figure \ref{f24} grids.} \label{t12}
\begin{center}  
   \begin{tabular}{c|rr|rr|rr}  
 \hline 
$G_i$ &  $ \|Q_h  u -   u_h \| $ & $O(h^r)$ &  $ \| \nabla_w( Q_h u- u_h )\| $ & $O(h^r)$ 
   &  $ \3bar  Q_h u- u_h  \3bar $ & $O(h^r)$  \\ \hline 
   &\multicolumn{6}{c}{$\lambda=1$ in \eqref{u-2} }\\  \hline 
 4 &    0.256E-2 &2.39 &    0.286E+0 &1.08 &    0.303E+0 &1.13 \\
 5 &    0.597E-3 &2.10 &    0.141E+0 &1.03 &    0.147E+0 &1.05 \\
 6 &    0.149E-3 &2.00 &    0.699E-1 &1.01 &    0.725E-1 &1.02 \\
 \hline  &\multicolumn{6}{c}{$\lambda=2$ in \eqref{u-2} }\\
 \hline 
 4 &    0.294E+0 &2.74 &    0.106E+2 &1.24 &    0.115E+2 &1.39 \\
 5 &    0.437E-1 &2.75 &    0.494E+1 &1.10 &    0.521E+1 &1.15 \\
 6 &    0.734E-2 &2.57 &    0.242E+1 &1.03 &    0.252E+1 &1.05 \\
  \hline  
\end{tabular} \end{center}  \end{table}

  \begin{table}[H]
  \caption{By the $P_2$-$P_2$/$P_4$ element for \eqref{u-2}  on Figure \ref{f24} grids.} \label{t13}
\begin{center}  
   \begin{tabular}{c|rr|rr|rr}  
 \hline 
$G_i$ &  $ \|Q_h  u -   u_h \| $ & $O(h^r)$ &  $ \| \nabla_w( Q_h u- u_h )\| $ & $O(h^r)$ 
   &  $ \3bar  Q_h u- u_h  \3bar $ & $O(h^r)$  \\ \hline 
   &\multicolumn{6}{c}{$\lambda=1$ in \eqref{u-2} }\\  \hline 
 2 &    0.692E-2 &3.21 &    0.123E+0 &2.45 &    0.146E+0 &2.55 \\
 3 &    0.481E-3 &3.85 &    0.170E-1 &2.85 &    0.193E-1 &2.92 \\
 4 &    0.316E-4 &3.93 &    0.219E-2 &2.96 &    0.238E-2 &3.02 \\
 \hline  &\multicolumn{6}{c}{$\lambda=2$ in \eqref{u-2} }\\
 \hline 
 2 &    0.347E+1 &1.66 &    0.174E+2 &1.06 &    0.147E+2 &1.69 \\
 3 &    0.467E+0 &2.90 &    0.332E+1 &2.39 &    0.246E+1 &2.58 \\
 4 &    0.407E-1 &3.52 &    0.433E+0 &2.94 &    0.327E+0 &2.91 \\
  \hline  
\end{tabular} \end{center}  \end{table}

  \begin{table}[H]
  \caption{By the $P_3$-$P_3$/$P_5$ element for \eqref{u-2}  on Figure \ref{f24} grids.} \label{t14}
\begin{center}  
   \begin{tabular}{c|rr|rr|rr}  
 \hline 
$G_i$ &  $ \|Q_h  u -   u_h \| $ & $O(h^r)$ &  $ \| \nabla_w( Q_h u- u_h )\| $ & $O(h^r)$ 
   &  $ \3bar  Q_h u- u_h  \3bar $ & $O(h^r)$  \\ \hline 
   &\multicolumn{6}{c}{$\lambda=1$ in \eqref{u-2} }\\  \hline 
 2 &    0.137E-2 &4.88 &    0.260E-1 &3.35 &    0.294E-1 &3.33 \\
 3 &    0.494E-4 &4.79 &    0.208E-2 &3.64 &    0.222E-2 &3.73 \\
 4 &    0.167E-5 &4.89 &    0.148E-3 &3.81 &    0.150E-3 &3.88 \\
 \hline  &\multicolumn{6}{c}{$\lambda=2$ in \eqref{u-2} }\\
 \hline  
 2 &    0.120E+0 &3.58 &    0.166E+1 &2.48 &    0.700E+0 &3.51 \\
 3 &    0.865E-2 &3.79 &    0.255E+0 &2.70 &    0.483E-1 &3.86 \\
 4 &    0.252E-3 &5.10 &    0.201E-1 &3.66 &    0.310E-2 &3.96 \\
  \hline  
\end{tabular} \end{center}  \end{table}

\end{document}